\makeatletter\@ifclassloaded{llncs}{}{\usepackage{amsthm}}\makeatother
\newtheorem{prop}{Proposition}
\newtheorem{obs}[prop]{Observation}
\newtheorem{thm}[prop]{Theorem}
\newtheorem{qstn}[prop]{Question}
\newcommand{\FE}{\sqsubseteq}
\newcommand{\FEbar}{\sqsupseteq}
\newcommand{\ZFC}{{\rm ZFC}}
\newcommand{\GCH}{{\rm GCH}}
\newcommand{\GA}{{\rm GA}}
\newcommand{\Add}{\mathop{\rm Add}}
\newcommand{\Coll}{\mathop{\rm Coll}}
\newcommand{\Boxup}{\ensuremath{\boxtriangleup}}
\newcommand{\Boxdown}{\rotatebox[origin=c]{180}{\ensuremath{\boxtriangleup}}}
\newcommand{\subBoxup}{{\ensuremath{\boxtriangleup}}}
\newcommand{\subBoxdown}{{\scriptsize\rotatebox[origin=c]{180}{\ensuremath{\boxtriangleup}}}}
\renewcommand{\P}{\mathbb{P}}
\newcommand{\bfL}{\mathbf{L}}
\newcommand{\restrict}{{\upharpoonright}} 
\newcommand{\elesub}{\prec}
\newcommand{\df}{\it} 
\newcommand{\axiomf}[1]{{\rm #1}}
\newcommand{\theoryf}[1]{\hbox{$\mathsf{#1}$}}
\newcommand{\MLF}[1]{\mathrm{ML}(\Boxup,#1)}
\newcommand{\MLG}[1]{\mathrm{ML}(\Boxdown,#1)}
\newenvironment{proofsketch}{\emph{Proof sketch.}}{\hfill$\Box$\\ \smallskip}
\newcommand{\wech}[1]{}
\begin{document}
\authorrunning{J.\ D.\ Hamkins, B.\ L\"owe}
\author{Joel David Hamkins$^{1,2}$, Benedikt L\"owe$^{3,4}$\thanks{The research of the first author has been supported in part by NSF grant DMS-0800762, PSC-CUNY grant 64732-00-42 and Simons Foundation grant 209252. The second author would like to thank the CUNY Graduate Center in New York for their hospitality during his sabbatical in the fall of 2009. In addition, both authors acknowledge the generous support provided to them as visiting fellows in Spring 2012 at the Isaac Newton Institute for Mathematical Sciences in Cambridge, U.K.}}
\title{Moving up and down in the generic multiverse}

\institute{Mathematics Program, The Graduate Center of The City University of New York, 365 Fifth Avenue, New York, NY 10016, U.S.A.; \texttt{jhamkins@gc.cuny.edu} \and Department of Mathematics, The College of Staten Island of CUNY, Staten Island, NY 10314, U.S.A.,
\and
Institute for Logic, Language
and Computation, Universiteit van Amsterdam, Postbus 94242, 1090 GE
Amsterdam, The Netherlands; \texttt{b.loewe@uva.nl} \and
Department Mathematik, Universit\"at Hamburg, Bundesstrasse
55, 20146 Hamburg, Germany}

\maketitle

\begin{abstract} We investigate the \emph{modal logic of the generic multiverse} which is a bimodal logic with operators corresponding to the relations ``is a forcing extension of'' and ``is a ground model of''. The fragment of the first relation is the \emph{modal logic of forcing} and was studied by the authors in earlier work. The fragment of the second relation is the \emph{modal logic of grounds} and will be studied here for the first time. In addition, we discuss which combinations of modal logics are possible for the two fragments.
\end{abstract}

\section{Introduction}

\subsection{The generic multiverse}\label{ssec:multiverse}

Recently, the \emph{generic multiverse} has become a concept of great interest to set theorists and philosophers of mathematics alike:

\begin{quote}
The generic multiverse is generated from each universe
of the collection by closing under generic extensions (enlargements) and under generic
refinements (inner models of a universe which the given universe is a generic extension
of). (W.\ Hugh Woodin, \cite{woodinpreprint})\end{quote}

If you fix a universe of set theory $V$ and iteratively build all set forcing extensions and ground models---and throughout this article unless stated specifically otherwise we shall mean always set forcing when discussing forcing---you naturally produce a Kripke structure $\mathrm{GM}(V)$ consisting of these worlds together with the accessibility relation $M \FE N$ (``$N$ is a forcing extension of $M$'') and its converse relation $N \FEbar M$ (``$M$ is a ground of $N$''). Hugh Woodin has investigated \emph{multiverse truth}, that is, truth in all models of the generic multiverse, in connection with his programme to solve the alethic status of the continuum hypothesis.  The first author has proposed a philosophy of mathematics based on a broader multiverse perspective, where we have many
different legitimate concepts of set (not merely those arising by set forcing), and these are instantiated in their corresponding set-theoretic universes, which relate in diverse ways as forcing extensions, large cardinal ultrapowers, definable inner models and so on \cite{hamkinstoappear}. Although the generic multiverse of a model of set theory is merely a local neighborhood within the broader multiverse, nevertheless we take it that our project in this article, to study the forcing modalities of the generic multiverse, surely engages with the multiverse perspective.

Due to the fact that the relations $\FE$ and $\FEbar$ are converses of each other, the bimodal logic of the Kripke structure  $(\mathrm{GM}(V),\FE,\FEbar)$ is similar to temporal logics where the modality $\mathbf{F}$ (``there is a time in the future'') is converse to the modality $\mathbf{P}$ (``there is a time in the past'') \cite{JvBbook}. It is our overall aim to investigate this bimodal logic of the general multiverse and find out which validities hold in general (provably in $\ZFC$) or in the multiverse generated from specific universes.

\subsection{The modal logic of forcing and related work}

In our previous paper \cite{HamkinsLoewe2008:TheModalLogicOfForcing}, we introduced the modal logic of forcing and proved that the \ZFC-provably valid principles of forcing were exactly those in the modal theory known as \theoryf{S4.2}. The modal logic of forcing corresponds to the monomodal fragment of the modal logic discussed in \S\,\ref{ssec:multiverse} that only uses the relation $\FE$; or to the part of the generic multiverse that is generated only by the operation of taking forcing extensions and not ground models.

In \cite{HamkinsLoewe2008:TheModalLogicOfForcing}, we not only consider the $\ZFC$-provable modal logic of forcing, but also the modal logic of forcing of particular universes $V$. We show that this modal logic always lies between $\theoryf{S4.2}$ and $\theoryf{S5}$ and that the two extreme values are realized (for more details, cf. \S\,\ref{ssec:definitions}).
Various other aspects of the modal logic of forcing are considered in
\cite{Leibman2004:Dissertation,HamkinsWoodin2005:NMPccc,Fuchs2008:ClosedMaximalityPrinciples,Fuchs2009:CombinedMaximalityPrinciplesUpToLargeCardinals,Leibman2010:TheConsistencyStrengthOfMPccc(R),Rittberg2010:TheModalLogicOfForcing,FriedmanFuchinoSakai:OnTheSetGenericMultiverse,EsakiaLoewe,HLLsubmitted}.
The paper \cite{icla2009} presented at ICLA 2009 gives an overview of the status of research and creates a connection between the modal logic of forcing and ``set-theoretic geology'', i.e., going down from a universe to its ground models. This connection is further developed in this paper.

\subsection{The results of this paper}

As stated above, our overall aim is to understand the bimodal logic of the generic multiverse. However, we know rather little about the general status of the bimodal logic (cf.\ Footnote \ref{fn:whatweknowaboutthemixedlogic}). Instead of the bimodal theory, we consider the two monomodal fragments (the modal logic of forcing and the modal logic of grounds) and their possible combinations.

The main result of this paper is Theorem~\ref{Theorem.S4.2Arises}, constructing a model of set theory whose modal logic of grounds is $\theoryf{S4.2}$. This theorem is the downward analogue of the main result in \cite{HamkinsLoewe2008:TheModalLogicOfForcing}, where we showed that the modal logic of forcing over $\bfL$ is $\theoryf{S4.2}$. Based on the proof idea of the main result, we then make a foray into the bimodal world, by considering which combinations are possible for the two monomodal fragments; i.e., for which pairs $(\Lambda,\Lambda^*)$ can we find a model $V$
such that $\MLF{V} = \Lambda$ and $\MLG{V}=\Lambda^*$. We consider all possible combinations with $\Lambda,\Lambda^* \in \{\theoryf{S4.2},\theoryf{S5}\}$.

The paper is organized as follows: in \S\,\ref{ssec:definitions}, we give the necessary definitions in order to discuss the basic properties of the modal logic of grounds (and how it differs from the modal logic of forcing) in \S\,\ref{sec:basic}. In \S\,\ref{sec:combinations}, we finally consider the various combinations of upward and downward modal logics. The paper is not self-contained and uses ideas and concepts from the papers
\cite{HamkinsLoewe2008:TheModalLogicOfForcing,Reitz2007:TheGroundAxiom}; the proofs of our theorems are sketches and will be presented in more detail in the journal version of the paper.

\subsection{Definitions}\label{ssec:definitions}

In the following, we denote by $\mathrm{P}$ a countable set of propositional letters; for modal operators $\Boxup$ and $\Boxdown$,
$\mathcal{L}_\subBoxup$ and $\mathcal{L}_{\subBoxup,\subBoxdown}$ are the monomodal and bimodal propositional languages with the appropriate operators. We assume that the reader is familiar with the standard axioms and systems of monomodal logic, in particular,
$\axiomf{.2}$, $\theoryf{S4}$, $\theoryf{S4.2}$, and $\theoryf{S5}$ (if not, there is a summary in \cite[\S\,1]{HamkinsLoewe2008:TheModalLogicOfForcing}).

By $\mathcal{L}_\in$, we denote the first-order language of set theory with its set of sentences $\mathrm{Sent}(\mathcal{L}_\in)$. Any function $I : \mathrm{P} \to \mathrm{Sent}(\mathcal{L}_\in)$ is called an \emph{interpretation}. An interpretation $I$ generates a valuation of any Kripke structure $(F,R)$ in which the worlds consist of models $M$ of set theory and $R$ is any relation between them: via
$$I^*(M) := \{ p \,;\, M\models I(p)\},$$
$(F,R,I^*)$ becomes a Kripke model (similarly, for the bimodal language if we have two relations on $F$).

Note that in our special case (when $R = {\FE}$), the validity of a modal formula at a world of the Kripke model is not just a meta-theoretic property of the Kripke model, but can be expressed in the language of set theory: e.g., $\Boxup\varphi$ is interpreted by  ``for all generic extensions, $\varphi$ holds'' which by the Forcing Theorem \cite[Theorem 14.6]{Jech:SetTheory3rdEdition} is equivalent to ``for all Boolean algebras $\mathbb{B}$, we have that $\llbracket\varphi\rrbracket_\mathbb{B} = \mathbf{1}$''. Similarly, if $R={\FEbar}$, we can use a theorem of Laver's (cf.\ \cite{Laver2007:CertainVeryLargeCardinalsNotCreated}) that the ground model is definable with parameters in the forcing extension, in order to see that ``for all ground models, $\varphi$ holds'' is expressible in the language of set theory (this observation is due to Reitz, cf.\ \cite[Theorem 8]{geology}).

These observations are closely related to Woodin's result about \emph{multiverse truth}:

\begin{thm}[Woodin, 2009] There is a recursive transformation $\varphi \mapsto \varphi^*$ such that $M\models \varphi^*$ is equivalent to the statement ``for every model $N$ in the generic multiverse generated by $M$, $\varphi$ is true''.
\end{thm}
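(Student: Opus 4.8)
\emph{Proof sketch.} The plan is to reduce truth throughout $\mathrm{GM}(M)$ to a bounded composition of the two single-step modalities, each of which is already first-order expressible over $M$. Recall from the discussion above that ``every forcing extension satisfies $\theta$'' is rendered by a sentence of $\mathcal{L}_\in$---write it $U(\theta)$---via the Forcing Theorem (namely $\llbracket\theta\rrbracket_\mathbb{B}=\mathbf{1}$ for every complete Boolean algebra $\mathbb{B}$), and that ``every ground satisfies $\theta$'' is likewise rendered by a sentence $D(\theta)$, using the uniform definability of the grounds (Laver, Reitz). Both $U$ and $D$ are effective syntactic operations on formulas, so any finite composition of them is again a sentence of $\mathcal{L}_\in$ depending recursively on the input.

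The heart of the argument is a normal form for the generic multiverse. First I would prove that $N\in\mathrm{GM}(M)$ if and only if $N$ is a forcing extension of a ground of $M$, i.e.\ there is a $W$ with $W\FEbar M$ and $W\FE N$. Call this collection $\mathcal{N}$. Clearly $M\in\mathcal{N}$. Closure under forcing extensions is immediate, since an extension of an extension of $W$ is again an extension of $W$ by iteration, and $W$ remains a ground of $M$. For closure under grounds, suppose $N=W[g]$ with $W$ a ground of $M$, and let $N'$ be a ground of $N$; then $W$ and $N'$ are both grounds of $N$, so by the \emph{downward directedness of the grounds} they have a common ground $W'$, which is then a ground of $M$ with $N'$ a forcing extension of $W'$, whence $N'\in\mathcal{N}$. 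Since $\mathcal{N}$ contains $M$ and is closed under both operations, it contains---and hence equals---$\mathrm{GM}(M)$.

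Granting the normal form, the conclusion is pure bookkeeping. ``Every $N\in\mathrm{GM}(M)$ satisfies $\varphi$'' unfolds as ``for every ground $W$ of $M$ and every forcing extension of $W$, $\varphi$ holds'', that is, ``every ground $W$ of $M$ satisfies $U(\varphi)$''. I therefore set $\varphi^{*}:=D\big(U(\varphi)\big)$. By the previous paragraph $\varphi^{*}$ is a sentence of $\mathcal{L}_\in$, the map $\varphi\mapsto\varphi^{*}$ is recursive, and $M\models\varphi^{*}$ holds exactly when $\varphi$ is true throughout the generic multiverse generated by $M$, as required.

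The hard part is the normal form, and within it the appeal to the downward directedness of the grounds: this is what lets one commute an ``extension'' step past a ``ground'' step and so collapse an arbitrarily long alternation $\FE,\FEbar,\FE,\FEbar,\dots$ of multiverse moves down to the single pattern ``first down to a ground, then up to an extension''. This directedness is the genuine set-theoretic content (it is a theorem of set-theoretic geology); everything else is the translation of the two modalities into $\mathcal{L}_\in$ already recorded above, together with the observation that these translations compose recursively.
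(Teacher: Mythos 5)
The paper offers no proof of this statement---it is quoted as a theorem of Woodin's---so your attempt can only be judged on its own terms, and on those terms it has a genuine gap at its central step. Your normal form (``$N\in\mathrm{GM}(M)$ iff $N$ is a forcing extension of a ground of $M$'') is established only by invoking the downward directedness of the grounds, which you describe as ``a theorem of set-theoretic geology.'' But the paper explicitly poses exactly this as Question~\ref{q:ddg} and says ``we do not know the answer to this question''; indeed the whole point of Theorem~\ref{Theorem.S4.2Arises} is to build one particular model in which $\mathrm{DDG}$ can be verified. So the commutation step that collapses an arbitrary $\FE,\FEbar$-alternation to ``down, then up'' is precisely the open problem, not a known lemma. (A further wrinkle: even if you knew $\mathrm{DDG}$ held in $M$, your closure-under-grounds argument applies it inside an arbitrary model $N$ of the multiverse, so you would need it throughout $\mathrm{GM}(M)$, i.e., as a theorem of $\ZFC$.)

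The surrounding scaffolding of your argument is fine: the single-step modalities are indeed rendered by recursive syntactic operations $U$ and $D$ via the Forcing Theorem and the Laver--Reitz definability of grounds, and once one has \emph{some} bounded normal form for membership in $\mathrm{GM}(M)$, the composition of these operations gives $\varphi^*$ recursively. What is missing is a normal form provable in $\ZFC$ without $\mathrm{DDG}$. Woodin's own argument achieves this with a longer alternation pattern (reaching every multiverse model in a bounded number of up/down steps from $M$) whose closure properties are verified not by directedness of the grounds but by absorption and amalgamation lemmas for collapse forcing---the observation that any forcing extension by a poset of size at most $\delta$ is absorbed into a $\Coll(\omega,\delta)$ extension. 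If you replace your appeal to $\mathrm{DDG}$ by that machinery (and adjust $\varphi^*$ to the corresponding longer composition of $U$ and $D$), the proof can be completed; as written, it rests on an open hypothesis.
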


The fact that the modalities are expressible in the language of set theory allows us to move from interpretations to \emph{translations}:
We call a function $H : \mathcal{L}_{\subBoxup,\subBoxdown} \to \mathrm{Sent}(\mathcal{L}_\in)$ a \emph{translation} if
\begin{itemize}
\item $H(\varphi\wedge\psi) = H(\varphi)\wedge H(\psi)$,
\item $H(\neg\varphi) = \neg H(\varphi)$,
\item $H(\Boxup\varphi)$ is the sentence stating ``for all forcing extensions $M$, we have $M\models H(\varphi)$'', and
\item $H(\Boxdown\varphi)$ is the sentence stating ``for all grounds $M$, we have $M\models H(\varphi)$''.
\end{itemize}

Now, we can define the modal logic of the multiverse and two of its fragments: fixing a universe $V$, we call

$$\mathrm{ML}(\Boxup,\Boxdown,V) := \{\varphi\in\mathcal{L}_{\subBoxup,\subBoxdown}\,;\,\mbox{ for all translations $H$, $V\models H(\varphi)$}\}$$
the \emph{modal logic of the generic multiverse} generated by $V$. Similarly,
\begin{align*}
\mathrm{ML}(\Boxup,V) &:= \{\varphi\in\mathcal{L}_\subBoxup\,;\,\mbox{ for all translations $H$, $V\models H(\varphi)\}$, and}\\
\mathrm{ML}(\Boxdown,V) &:= \{\varphi\in\mathcal{L}_\subBoxdown\,;\,\mbox{ for all translations $H$, $V\models H(\varphi)$}\}
\end{align*}
are the \emph{modal logic of forcing} and the \emph{modal logic of grounds} at $V$, respectively. Metaphorically, we think of forcing extensions going upwards and thus the relation being a ground model going downwards. We therefore use the words ``upward'' and ``downward'' to indicate which
modalities we are talking about: e.g., if we say that a model $V$ satisfies upward $\theoryf{S4.2}$, we mean that $\MLF{V} = \theoryf{S4.2}$; similarly, we talk of ``upward buttons'' and ``downward buttons'' (see below).

As mentioned, in \cite[Theorem 21]{HamkinsLoewe2008:TheModalLogicOfForcing}, we proved that for any universe $V$, we get that
$$\theoryf{S4.2} \subseteq \MLF{V} \subseteq \theoryf{S5}\mbox{,}$$
and that the two extreme values are obtained. This immediately implies that the $\ZFC$-provable modal logic of forcing
$$\{\varphi\in\mathcal{L}_\subBoxup\,;\,\mbox{ for all translations $H$, $\ZFC\vdash H(\varphi)$}\}$$
is exactly $\theoryf{S4.2}$. Two facts about forcing are crucial for this result: the first is that the axiom $\axiomf{.2}$ is always a validity for the modal logic of forcing over any universe $V$ (cf.\ \cite{Leibman2004:Dissertation} and \cite[Theorem 7]{HLLsubmitted} for the theoretical background), providing the lower bound; the second is the existence of independent switches over each model of set theory \cite[Theorem 17]{HamkinsLoewe2008:TheModalLogicOfForcing}. We shall see that the situation is quite different for the modal logic of grounds.
Note that it is not known whether the modal logic of forcing can obtain any other value than $\theoryf{S4.2}$ or $\theoryf{S5}$ \cite[Question 19]{HamkinsLoewe2008:TheModalLogicOfForcing}.

In order to show upper bounds for a modal logic, we used certain control statements called \emph{buttons} and \emph{switches}: a {\df switch} is a statement $\varphi$ such that $\varphi$ and $\neg\varphi$ are both necessarily possible. A {\df button} is a statement $\varphi$ that is necessarily possibly necessary. These controls are {\df independent} if they can be operated independently, without affecting the status of the others
(cf.\
\cite[p.\ 1789]{HamkinsLoewe2008:TheModalLogicOfForcing} or \cite[\S\,4]{HLLsubmitted} for more detail). We shall use the abstract results that produce upper bounds from the existence of control statements in the proof sketches in this paper. For this we call a function $\sigma : \mathrm{P} \to \mathcal{L}_\Box$ a \emph{substitution} (this is the purely modal version of our notion of interpretation above); every substitution induces a function $\hat\sigma$ on the entire set of modal formulas. If $(F,R,v,w)$ is a pointed Kripke model,
we let $\mathrm{ML}(F,R,v,w) := \{\varphi\in\mathcal{L}_\Box\,;$ for all substitutions $\sigma$, we have that $F,R,v,w\models \hat\sigma(\varphi)\}$.

\begin{thm}\label{thm:buttonswitch}
If a pointed reflexive and transitive Kripke model $(F,R,v,w)$ has arbitrarily large finite independent families of buttons and switches, then
$$\mathrm{ML}(F,R,v,w) \subseteq \theoryf{S4.2}$$
\cite[Theorem 13]{HLLsubmitted}.
\end{thm}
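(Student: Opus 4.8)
The plan is to establish the upper bound by the standard ``control-frame'' method: to prove $\mathrm{ML}(F,R,v,w)\subseteq\theoryf{S4.2}$ it suffices to show that every modal assertion $\varphi\notin\theoryf{S4.2}$ can be refuted at $w$, i.e.\ that there is a substitution $\sigma$ with $F,R,v,w\not\models\hat\sigma(\varphi)$. For this I would invoke the completeness of $\theoryf{S4.2}$ with respect to finite rooted directed reflexive transitive frames: if $\varphi\notin\theoryf{S4.2}$, then there is such a frame $W$ with root $w_0$ and a valuation under which $\varphi$ fails at $w_0$. The whole argument then consists of transporting this abstract refutation into the model $(F,R,v,w)$ by means of the control statements.

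Next I would analyse the frame generated by the controls. Fix $n$ independent (unpushed) buttons $b_1,\dots,b_n$ and $m$ independent switches $s_1,\dots,s_m$, and to each world $u$ reachable from $w$ associate its \emph{control state} $(P_u,f_u)$, where $P_u=\{i: u\models\Box b_i\}$ records the pushed buttons and $f_u\in\{0,1\}^m$ records the switch settings. Reflexivity and transitivity make buttons stable (once $\Box b_i$ holds it continues to hold along $R$), so the map $q\colon u\mapsto(P_u,f_u)$ sends $R$ upward into the \emph{control frame} $C_{n,m}$, whose worlds are the pairs $(P,f)$ with $P\subseteq\{1,\dots,n\}$ and $f\in\{0,1\}^m$, ordered by $(P,f)\mathrel{R}(P',f')\iff P\subseteq P'$. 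I claim $q$ is a surjective p-morphism onto $C_{n,m}$: the forth condition is exactly button-stability, while the back condition --- that from $u$ one can reach a world realizing any prescribed larger button-set together with any switch-setting --- is precisely what independence of the family buys us.

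The combinatorial heart of the argument is that every finite rooted directed reflexive transitive frame $W$ is a p-morphic image of a control frame $C_{n,m}$, via a map $\pi$ sending the root of $C_{n,m}$ to $w_0$. Here the switches supply the nontrivial clusters of $W$ and the buttons supply its skeleton: the skeleton of $C_{n,m}$ is the Boolean lattice $\mathcal{P}(n)$, each node blown up into a cluster of size $2^m$ by the switches, and every finite directed frame arises as a p-morphic image of such a pre-Boolean algebra for suitably large $n,m$. This is exactly the point at which the hypothesis of \emph{arbitrarily large} independent families is essential, since the required $n$ and $m$ grow with $\varphi$. I expect this lemma --- in effect, the identification of $\theoryf{S4.2}$ as the logic of the pre-Boolean control frames --- to be the main obstacle, the delicate bookkeeping being the matching of cluster sizes against switches and of the lattice structure against buttons.

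Finally I would pull the refuting valuation back along the composite p-morphism $\pi\circ q$. Each control state is pinned down at $w$'s generated submodel by the modal \emph{state formula}
$$\theta_{P,f}\;:=\;\bigwedge_{i\in P}\Box b_i\wedge\bigwedge_{i\notin P}\neg\Box b_i\wedge\bigwedge_{f(j)=1}s_j\wedge\bigwedge_{f(j)=0}\neg s_j,$$
so I may set $\sigma(p):=\bigvee\{\theta_{P,f}:\pi(P,f)\models_W p\}$, which makes $\hat\sigma(p)$ true at exactly those worlds $u$ with $(\pi\circ q)(u)\models_W p$; thus $\sigma$ realizes the pullback valuation. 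The standard substitution/p-morphism truth lemma then gives $F,R,v,w\models\hat\sigma(\psi)\iff W,w_0\models_W\psi$ for every modal formula $\psi$, and taking $\psi=\varphi$ and recalling that $\varphi$ fails at $w_0$ yields $F,R,v,w\not\models\hat\sigma(\varphi)$. Hence $\varphi\notin\mathrm{ML}(F,R,v,w)$, and since $\varphi$ was an arbitrary formula outside $\theoryf{S4.2}$, we conclude $\mathrm{ML}(F,R,v,w)\subseteq\theoryf{S4.2}$.
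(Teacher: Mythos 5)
Your proposal is correct and follows essentially the same route as the source the paper cites for this theorem (the paper gives no proof of its own, only the reference to [HLL, Theorem 13]): completeness of $\theoryf{S4.2}$ for finite rooted directed reflexive transitive frames, realization of such a frame as a p-morphic image of a pre-Boolean ``control frame'' built from $n$ independent unpushed buttons and $m$ independent switches, and pullback of the refuting valuation along the composite p-morphism via the mutually exclusive state formulas. The one step you leave as a black box --- that every finite rooted directed reflexive transitive frame is a p-morphic image of some $C_{n,m}$ --- is precisely the combinatorial lemma established in the cited work, so nothing essential is missing or misdirected.
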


\begin{thm}\label{thm:s5}
If a pointed reflexive and transitive Kripke model $(F,R,v,w)$ has arbitrarily large finite independent families of switches, then
$$\mathrm{ML}(F,R,v,w) \subseteq \theoryf{S5}$$
\cite[Theorem 10]{HLLsubmitted}.
\end{thm}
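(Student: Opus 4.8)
The plan is to prove the contrapositive: every modal formula $\varphi$ that is \emph{not} a theorem of $\theoryf{S5}$ must fail at the pointed model $(F,R,v,w)$ under some substitution. The engine is the completeness of $\theoryf{S5}$ with respect to its finite frames. Since $\varphi\notin\theoryf{S5}$, there is a finite $\theoryf{S5}$-frame — that is, a finite set $W$ carrying the total relation $W\times W$, a single cluster — together with a valuation and a point $x_0$ at which $\varphi$ fails; write $N := (W,W\times W,u_0)$ for this counter-model. Because every world of a cluster is bisimilar to a duplicate of itself, I may freely pad $W$ by duplicating worlds (copying their $u_0$-values) so as to assume $|W|=2^n$ for some $n$, leaving the failure of $\varphi$ at $x_0$ intact while arranging that the worlds of $W$ are in bijection with the $2^n$ truth-value patterns of $n$ switches. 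The hypothesis that $(F,R,v,w)$ carries arbitrarily large finite independent families of switches now supplies such a family $s_1,\dots,s_n$, once $\varphi$, and hence $n$, is fixed.

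Next I would code $N$ into $F$ by a substitution. For each $x\in W$ let $c_x := \bigwedge_i(\pm s_i)$ be the conjunction of switch-literals describing the pattern assigned to $x$; at every world $t\in F$ the valuation $v$ makes exactly one $c_x$ true, so $t$ unambiguously \emph{codes} some $x\in W$. Define $\sigma$ on each propositional letter $p$ of $\varphi$ by $\sigma(p) := \bigvee\{\,c_x : N,x\models p\,\}$, leaving the switch letters fixed. The heart of the argument is the truth lemma: for every subformula $\psi$ of $\varphi$ and every world $t$ reachable from $w$ coding $x$, one has $F,R,v,t\models\hat\sigma(\psi)$ if and only if $N,x\models\psi$. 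The base case is immediate from the definition of $\sigma$, and the Boolean cases follow from the mutual exclusivity of the $c_x$.

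The crucial — and only delicate — case is $\psi=\Box\chi$. Here $N,x\models\Box\chi$ means $N,x'\models\chi$ for \emph{every} $x'\in W$, the relation on the cluster being total. For the forward direction, given any $x'\in W$ I use that the $s_i$ are switches (each $s_i$ and $\neg s_i$ necessarily possible) and independent to pass from $t$ to an $R$-successor coding $x'$, whence the inductive hypothesis transfers $\hat\sigma(\chi)$ there into $N,x'\models\chi$. For the converse, every $R$-successor $t'$ of $t$ is again reachable from $w$ by transitivity and, since the $2^n$ patterns biject with $W$, codes some $x'$; the inductive hypothesis turns $\hat\sigma(\chi)$ at $t'$ into $N,x'\models\chi$. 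Symmetry of $R$ is never invoked: the switch property alone manufactures the two-way reachability that distinguishes the $\theoryf{S5}$ bound here from the $\theoryf{S4.2}$ bound of Theorem~\ref{thm:buttonswitch}. Finally, as the switches are necessarily possible, from $w$ I reach a world coding $x_0$, where $\hat\sigma(\varphi)$ fails, so $\varphi\notin\mathrm{ML}(F,R,v,w)$.

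I expect the main obstacle to be the bookkeeping in the modal step, specifically the elimination of ``junk'' switch-patterns corresponding to no world of $W$. Padding $W$ to cardinality $2^n$ by bisimulation-invariant duplication is precisely what removes this difficulty, making the configuration-to-world map a bijection so that the converse direction leaves no residue of uncoded successors.
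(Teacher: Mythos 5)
Your overall strategy is exactly the standard one behind the cited result (note that the paper itself gives no proof of this theorem; it only cites Theorem~10 of the HLL paper): take a finite single-cluster countermodel for $\varphi\notin\theoryf{S5}$, label its worlds by patterns of $n$ independent switches, define the substitution $\sigma(p)=\bigvee\{c_x : N,x\models p\}$, and prove a truth lemma in which the switch property, rather than symmetry of $R$, supplies the ``return'' moves needed to simulate the total relation. Padding $W$ to cardinality $2^n$ by duplicating cluster worlds is a legitimate variant of the usual device (a surjection from patterns onto worlds), and both directions of your modal induction step are handled correctly.

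There is, however, one concrete step that fails as written: the conclusion. By definition, $\mathrm{ML}(F,R,v,w)$ consists of the formulas all of whose substitution instances hold \emph{at $w$ itself}, so you must exhibit the failure of $\hat\sigma(\varphi)$ at $w$. Your last sentence instead passes from $w$ to an $R$-successor coding $x_0$ and observes that $\hat\sigma(\varphi)$ fails there; that only shows $\Box\varphi\notin\mathrm{ML}(F,R,v,w)$, and the theory of a pointed model is not closed under the converse of necessitation, so $\varphi\notin\mathrm{ML}(F,R,v,w)$ does not follow. The repair is easy and is part of the standard argument: the bijection between switch patterns and worlds of $W$ is yours to choose, so permute it so that the pattern actually realized at $w$ is the one assigned to $x_0$; then the truth lemma applied with $t=w$ and $x=x_0$ gives $F,R,v,w\not\models\hat\sigma(\varphi)$ directly. (A minor further point: you should assume, renaming if necessary, that the switch letters do not occur in $\varphi$, so that ``leaving the switch letters fixed'' does not conflict with the action of $\sigma$ on the letters of $\varphi$.)
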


\section{Basic Results about the modal logic of grounds.}
\label{sec:basic}

It is easy to see that every $\theoryf{S4}$ assertion is downward valid (since a ground of a ground is a ground), but things are not as easy with the axiom $\axiomf{.2}$. This axiom would be valid if the answer to the following question is ``Yes'':
\begin{qstn}\label{q:ddg}
Let $V$ be a model of set theory, and $M$ and $N$ two grounds of $V$, i.e., $V = M[G] = N[H]$ for some generic filters $G$ and $H$. Is there some model $K$ which is a ground of both $M$ and $N$, i.e., there are $K$-generic filters $G^*$ and $H^*$ such that $K[G^*] = M$ and $K[H^*] = N$?
In other words, is $\FEbar$
directed among the grounds of $V$?
\end{qstn}

However, we do not know the answer to this question. We shall say that a universe $V$ in whose generic multiverse the answer to Question~\ref{q:ddg} is ``Yes'' satisfies the axiom of \emph{downward directedness of grounds} ($\mathrm{DDG}$). In all universes for which we can determine the truth value of $\mathrm{DDG}$, it is true.

The upper bound for the modal logic of forcing was \theoryf{S5}, but the situation for the modal logic of grounds is different. We denote by
$\theoryf{PL}$ (for ``propositional logic'') the modal logic satisfying $\Box p \leftrightarrow \Diamond p \leftrightarrow p$, i.e., the modal logic of a single reflexive point. By $\GA$, we denote the \emph{ground axiom} of \cite{Reitz2007:TheGroundAxiom} stating that the universe is not a non-trivial forcing extension of an inner model.\footnote{The ground axiom is jointly due to Reitz and the first author; cf.\ \cite{Hamkins2005:TheGroundAxiom,Reitz2006:Dissertation}.}
Clearly, the constructible universe satisfies $\GA$.

\begin{obs} If $V\models \GA$, then $\MLF{V} = \theoryf{PL}$.\end{obs}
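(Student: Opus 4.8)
The plan is to exploit the ground axiom to collapse the relevant structure to a single reflexive point. Since $V \models \GA$, the universe $V$ is not a nontrivial forcing extension of any inner model, so the only ground of $V$ is $V$ itself. Following the relation $\FEbar$ from $V$ therefore reaches only $V$, and since trivial forcing gives $V \FEbar V$, this point is reflexive; iterating does not leave $\{V\}$. Hence the pointed Kripke frame generated from $V$ under $\FEbar$ is the single reflexive point $(\{V\}, \FEbar\restrict\{V\}, V)$, whose modal logic is, by the very definition of $\theoryf{PL}$, equal to $\theoryf{PL}$. The remaining work is to transfer this computation from the abstract frame to the translation-based definition of $\MLF{V}$, which I would do by proving the two inclusions separately.

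For the lower bound $\theoryf{PL} \of \MLF{V}$, I would use that every theorem of $\theoryf{PL}$ is valid under every valuation of a single reflexive point. In particular it is valid under the valuation $I^*(V) = \{p \,;\, V \models I(p)\}$ induced by an arbitrary set-theoretic interpretation $I$, which is exactly the assertion that $V \models H(\varphi)$ for the corresponding translation $H$. So every $\theoryf{PL}$-theorem belongs to $\MLF{V}$.

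For the upper bound $\MLF{V} \of \theoryf{PL}$, the key point is that the modality collapses over $V$: because $V$ is its only ground, the sentence stating ``$\psi$ holds in all grounds'' is true at $V$ precisely when $\psi$ is true at $V$. Consequently, for any interpretation $I$ and any modal formula $\varphi$, the truth value of $H(\varphi)$ at $V$ depends only on the propositional valuation $I^*(V)$, exactly as at a single reflexive point. Given $\varphi \notin \theoryf{PL}$, I would fix a Boolean assignment to the finitely many propositional letters occurring in $\varphi$ that refutes $\varphi$ at the single reflexive point, and realize it set-theoretically by interpreting each such letter as a trivially true or trivially false sentence, for instance $\exists x\,(x = x)$ or $\exists x\,(x \neq x)$. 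The resulting translation $H$ then witnesses $V \not\models H(\varphi)$, so $\varphi \notin \MLF{V}$.

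Combining the two inclusions gives $\MLF{V} = \theoryf{PL}$. I anticipate no genuine obstacle here: the only steps that require any care are the reduction of the structure at $V$ to a single reflexive point, which is immediate from $\GA$ together with the reflexivity of $\FEbar$, and the realization of an arbitrary finite valuation by trivial sentences in the upper bound. Both are routine, which is presumably why this is recorded as an observation rather than a theorem.
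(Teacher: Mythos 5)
Your proof is correct and is exactly the argument the paper leaves implicit (the observation is stated without proof): under $\GA$ the only ground of $V$ is $V$ itself, so the $\FEbar$-generated pointed frame is a single reflexive point, the translation of $\Boxdown\psi$ collapses to that of $\psi$ by induction, and any non-theorem of $\theoryf{PL}$ is refuted by interpreting the letters as trivially true or false sentences. Note that the statement as printed reads $\MLF{V}$, but this is evidently a typo for $\MLG{V}$ --- the modal logic of forcing over a $\GA$ model such as $\bfL$ is $\theoryf{S4.2}$, not $\theoryf{PL}$ --- and you have, correctly, proved the intended statement about the modal logic of grounds.
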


\section{$\theoryf{S4.2}$ as the modal logic of grounds}
\label{sec:maintheorem}

The following theorem provides us with a model in which we can determine the modal logic of grounds. Its proof will serve as the underlying idea for the results in \S\,\ref{sec:combinations}.

\begin{thm}\label{Theorem.S4.2Arises}
If \ZFC\ is consistent, then there is a model of \ZFC\ whose ground valid assertions are exactly those in the modal theory
\theoryf{S4.2}.
\end{thm}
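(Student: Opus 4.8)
The plan is to construct a single model $V$ of $\ZFC$ and to prove the two inclusions $\theoryf{S4.2}\of\MLG{V}$ and $\MLG{V}\of\theoryf{S4.2}$ separately. The lower inclusion is the soundness direction and is almost free: the ground relation $\FEbar$ is reflexive ($V$ is a trivial ground of itself) and transitive (a ground of a ground is a ground, since $V = W[G]$ and $W = U[H]$ give $V = U[H\ast G]$), so every $\theoryf{S4}$ assertion is downward valid over any $V$. To secure the axiom $\axiomf{.2}$ as well, I would arrange that the grounds of $V$ are \emph{directed} under $\FEbar$, i.e.\ that $V$ satisfies $\mathrm{DDG}$ (Question~\ref{q:ddg}); since $\axiomf{.2}$ is exactly the frame condition of directedness, this yields $\theoryf{S4.2}\of\MLG{V}$ by soundness of $\theoryf{S4.2}$ over reflexive, transitive, directed frames.

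For the upper inclusion I would invoke Theorem~\ref{thm:buttonswitch}: it suffices to exhibit, in the pointed frame of grounds of $V$, arbitrarily large finite independent families of downward buttons and switches. For the buttons I would use the assertions $\beta_\kappa := $ ``$\kappa$ is a cardinal''. Because cardinals are absolute downward to grounds (a ground has at least as many cardinals as $V$), once $\beta_\kappa$ becomes true it stays true in every further ground, so $\beta_\kappa$ is a genuine downward button whenever $\kappa$ can be un-collapsed by passing to a ground. The construction of $V$ must therefore collapse a family of cardinals $\langle\kappa_i\rangle$ in such a way that each collapse can be independently removed by descending to a ground; an Easton-style product of collapses over $\bfL$ does this, with the independence of the buttons coming from the mutual genericity of the factors.

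The switches are the crux, and here the monotonicity of descent is the main obstacle: passing to a ground only removes sets, and being a cardinal is monotone under descent, so no cardinality statement can serve as a switch. To manufacture a switch $s$ I would code its bit \emph{redundantly} across infinitely many coordinates of the forcing, the coordinate at index $i$ coding the value $i \bmod 2$, and read off the value of $s$ from the ``frontier'', say from the parity of the least surviving coordinate. From any ground one can descend to remove an initial set of coordinates and thereby place the frontier on an even or on an odd index at will, so both $s$ and $\neg s$ are reachable below every world; this makes $s$ a switch, and disjoint blocks of coordinates give independent families. For this to work there must be \emph{no} minimal ground---otherwise the switches would freeze at the bottom---so I would take the forcing to be a class-length reverse Easton iteration (or Easton product) over $\bfL$, arranged so that every set-forcing ground of $V$ deletes only set-many coordinates and hence retains a proper class of them; in particular $\bfL$ itself is not a set-forcing ground. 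The same feature delivers $\mathrm{DDG}$: if $W_1,W_2$ are the grounds obtained by deleting the set-sized coordinate sets $T_1,T_2$, then deleting $T_1\cup T_2$ (still a set) gives a common further ground, so the grounds are meet-directed.

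The two technical points I expect to absorb most of the work are, first, the \emph{classification} of the grounds of $V$---one must show that every set-forcing ground really is obtained by deleting a set of coordinates, so that the meet-directedness argument for $\mathrm{DDG}$ and the independence arguments for the buttons and switches apply to all grounds and not merely to the obvious ones---and, second, verifying that the redundant coding genuinely produces switches in the modal sense, i.e.\ that the frontier parity is downward definable in each world and can be set either way from every ground. Granting these, Theorem~\ref{thm:buttonswitch} gives $\MLG{V}\of\theoryf{S4.2}$, and together with the lower bound we conclude $\MLG{V}=\theoryf{S4.2}$.
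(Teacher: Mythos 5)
Your overall architecture is exactly the paper's: force over $\bfL$ with a class-length Easton product so that the resulting model is bottomless, get the lower bound $\theoryf{S4.2}\of\MLG{V}$ from reflexivity, transitivity and $\mathrm{DDG}$, and get the upper bound from Theorem~\ref{thm:buttonswitch} using downward buttons controlled by individual factors and switches read off from the parity of the least ``surviving'' coordinate. The paper uses $\P=\prod_{\gamma\in\mathrm{Reg}^\bfL}\Add(\gamma,1)$ rather than collapses, with buttons ``there is no $\bfL$-generic subset of $\aleph_n^\bfL$'' rather than ``$\kappa$ is a cardinal'', but these are cosmetic variants of the same idea (and the Cohen version sidesteps any worries about what an Easton product of collapses does to cardinals).

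There is, however, one genuine misstep: the classification of grounds you propose to prove --- that every set-forcing ground of $V$ is obtained by deleting a set of coordinates --- is false, and your $\mathrm{DDG}$ argument via deleting $T_1\cup T_2$ rests on it. A ground can retain only \emph{part} of the generic at a single coordinate (e.g.\ the model generated over a tail by the even bits of the Cohen real at $\omega$ is a ground but is not a coordinate deletion). What is actually provable, and what the paper proves, is the weaker statement that every ground $W$ \emph{contains} a tail $\bfL[G^\alpha]$ for some $\alpha$: if $G^\alpha\notin W$, its $W$-name would yield a strictly descending sequence of Boolean values violating the chain condition of the $W$-forcing once $\alpha$ is large enough. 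By the intermediate model theorem, $\bfL[G^\alpha]\of W\of\bfL[G]$ then makes $\bfL[G^\alpha]$ a ground of $W$, and $\mathrm{DDG}$ follows by taking $\bfL[G^{\max\{\alpha,\beta\}}]$ as a common ground of any two grounds. The same weaker lemma is all you need for the buttons and switches, since from any ground you can first descend to a tail and then delete factors at will; so the repair is local, but as written your key lemma is not the one you can prove.
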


\begin{proofsketch} The idea of this proof is to use the \emph{bottomless model} of \cite{Reitz2007:TheGroundAxiom}: Let $\mathrm{Reg}^\bfL$ denote the class of regular $\bfL$-cardinals. Force over $\bfL$ with
$$\P=\prod_{\gamma\in\mathrm{Reg}^\bfL}\Add(\gamma,1)\mbox{,}$$
where we use Easton support and we use $\Add(\gamma,1)$ as defined in $\bfL$. Let $V=\bfL[G]$, where $G$ is $\bfL$-generic for
this forcing.

First, following Reitz, we argue that every ground model of $V$ contains a tail $\bfL[G^\alpha]$, where $G^\alpha=G\restrict \P^\alpha$, and
$\P^\alpha=\P\restrict[\alpha,\infty)$. That is, $\P^\alpha$ is the tail forcing, using only the factors from $\alpha$ onward. Suppose that $W$ is a
ground of $V$, so that $W[H]=V=\bfL[G]$. If the filter $G^\alpha$ is not in $W$, then it has a name there, and the Boolean value of the statement that this name is decided in certain ways compatible with the actual values of $G^\alpha$ will be a strictly descending sequence of Boolean values in the $W$-forcing, which violates the chain condition of that forcing (when $\alpha$ is much larger than that forcing). So, for large $\alpha$, $G^\alpha$ is an element of $W$.

In \cite{Reitz2007:TheGroundAxiom}, Reitz used this to show that $V$ has no bedrock, and we use the same argument to show that for any two grounds $M$ and $N$, we find $\alpha$ and $\beta$ such that $\bfL[G^\alpha]$ is a ground of $M$ and $\bfL[G^\beta]$ is a ground of $N$. Then if $\mu := \max\{\alpha,\beta\}$, $\bfL[G^\mu]$ is a ground of both $M$ and $N$. This proves that $\mathrm{DDG}$ holds in $V$, and thus $\axiomf{.2}$.

We now show that there are no additional modal validities by using Theorem~\ref{thm:buttonswitch}: Divide the regular cardinals above $\aleph_\omega$ into $\omega$ many disjoint classes $\Gamma_n$, each containing unboundedly many cardinals. Enumerate each class $\Gamma_n = \{\gamma^n_\alpha\,;\,\alpha< \mathrm{Ord}\}$ in order. Let $s_n$ be the statement ``the least $\alpha$ such that there exists
an $\bfL$-generic subset of $\gamma^n_\alpha$ is even.'' These statements are all true in $V$, since the corresponding $\alpha$ is $0$ in every case, as
the forcing $G$ explicitly adds an $\bfL$-generic subset of every $\gamma^n_\alpha$, including $\alpha=0$. In any ground model $W$ of $V$, we can go to
a deeper ground which is a tail extension, and then selectively remove additional factors of $G$ on indices in each $\Gamma_n$ so as to realize any
desired configuration of the switches in $L$. So the $s_n$'s are independent switches. Now let $b_n$ be the statement: ``there is no $\bfL$-generic
subset of $\aleph_n^\bfL$''. This statement is false in $V$, but true in any ground model of $V$ omitting the factor at $\aleph_n^\bfL$. Furthermore, once
true, the statement remains true in any deeper ground. Thus, each $b_n$ is a button. Finally, all these buttons and switches are independent, because
each is controlled by removing disjoint factors of $G$.\footnote{Note that $\bfL$-generic Cohen subsets of
different regular cardinals in $\bfL$ are necessarily mutually generic.}
Thus, Theorem~\ref{thm:buttonswitch} yields that $\MLG{V} = \theoryf{S4.2}$.
\end{proofsketch}

It follows that the \ZFC-provably valid principles of the modal of logic of grounds is a theory containing \theoryf{S4} and contained within \theoryf{S4.2}. If $\mathrm{DDG}$ is a theorem of $\ZFC$, the $\ZFC$-provable modal logic of grounds is exactly $\theoryf{S4.2}$.

\section{Combinations}\label{sec:combinations}

So far, we have looked at the modal logic of forcing and the modal logic of grounds in isolation, but ultimately, we are interested in determining the entire bimodal logic of the multiverse. Currently, we know almost nothing about the validity of mixed bimodal formulas beyond those validities that follow from the fact that the modal operators $\Boxup$ and $\Boxdown$ are defined by converse relations.\footnote{E.g., we know that $p\to\Boxup\neg\Boxdown\neg p$ and
$p\to\Boxdown\neg\Boxup\neg p$ hold. \label{fn:whatweknowaboutthemixedlogic}}
However, we can say something about possible combinations of modal logics of forcing with modal logics of grounds
(in the case of Theorems \ref{thm:s42s5} and \ref{conj:s5s42} under mild large cardinal assumptions).

\begin{thm}\label{thm:doubles4.2}
If $\ZFC$ is consistent, then there is a model of $\ZFC$ whose modal logic of forcing and modal logic of grounds are both $\theoryf{S4.2}$.
\end{thm}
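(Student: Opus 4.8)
The plan is to reuse the very same bottomless model from the proof of Theorem~\ref{Theorem.S4.2Arises}: let $V=\bfL[G]$, where $G$ is $\bfL$-generic for the Easton product $\P=\prod_{\gamma\in\mathrm{Reg}^\bfL}\Add(\gamma,1)$. That theorem already establishes $\MLG{V}=\theoryf{S4.2}$, so the only new task is to verify $\MLF{V}=\theoryf{S4.2}$. The lower bound $\theoryf{S4.2}\subseteq\MLF{V}$ holds over every universe, since forcing extensions are upward directed (any two forcing extensions of a common model have a further common forcing extension), which is exactly the upward validity of $\axiomf{.2}$. For the upper bound I would appeal to Theorem~\ref{thm:buttonswitch}: it suffices to exhibit arbitrarily large finite independent families of forcing (upward) buttons and switches over $V$.

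To produce these controls I would exploit the fact that $\P$ is an Easton product, hence preserves all cardinals and cofinalities and preserves the $\GCH$; thus $V$ is a cardinal-preserving, $\GCH$-satisfying extension of $\bfL$, and the initial status of the usual forcing controls in $V$ agrees with their status in $\bfL$. Fix a sparse sequence of regular $\bfL$-cardinals, split into two disjoint, suitably spaced subsequences $(\kappa_n)$ and $(\delta_n)$. Let the buttons be the statements $b_n\equiv$ ``$(\kappa_n^+)^\bfL$ is not a cardinal''; each is false in $V$ (cardinals are preserved), becomes true after collapsing with $\Coll(\kappa_n,(\kappa_n^+)^\bfL)$, and stays true in every further forcing extension, so it is an unpushed button. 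Let the switches be the $\GCH$-pattern statements $s_n\equiv$ ``$2^{\delta_n}=\delta_n^+$''; each is true in $V$, can be forced false by adjoining many subsets of $\delta_n$, and can be forced true again by collapsing, so each is flippable in both directions. Because all the controls act on pairwise disjoint blocks of cardinals, operating any one leaves the others untouched, so the families are independent at every finite size.

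Assembling these, Theorem~\ref{thm:buttonswitch} yields $\MLF{V}\subseteq\theoryf{S4.2}$, and with the automatic lower bound we obtain $\MLF{V}=\theoryf{S4.2}$. Since $\MLG{V}=\theoryf{S4.2}$ is inherited from Theorem~\ref{Theorem.S4.2Arises}, the single model $V$ realizes $\theoryf{S4.2}$ in both fragments. I would stress that the forcing controls and the ground controls are evaluated against the two \emph{converse} accessibility relations, so they never interfere: the bound on $\MLF{V}$ is a computation purely about extensions of $V$, while the bound on $\MLG{V}$ is a computation purely about grounds of $V$.

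The step I expect to be the main obstacle is the careful verification that these standard forcing controls---which give $\MLF{\bfL}=\theoryf{S4.2}$ in \cite{HamkinsLoewe2008:TheModalLogicOfForcing}---genuinely survive the passage to the bottomless extension $V=\bfL[G]$ and remain simultaneously independent. The delicate points are that no button is already pushed in $V$ (guaranteed by cardinal preservation of the Easton product) and that the switch-forcings and button-forcings, performed over a model that has already adjoined a generic subset to every regular $\bfL$-cardinal, still act independently on their designated blocks; this is where the Easton structure and the mutual genericity of Cohen subsets of distinct regular cardinals must be invoked, exactly as in the independence argument of Theorem~\ref{Theorem.S4.2Arises}.
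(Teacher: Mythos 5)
Your overall strategy coincides with the paper's: take the very same bottomless model $V=\bfL[G]$, get $\MLG{V}=\theoryf{S4.2}$ from Theorem~\ref{Theorem.S4.2Arises}, note that $\theoryf{S4.2}\subseteq\MLF{V}$ holds for free, and finish the upper bound by exhibiting independent upward buttons and switches, using that the Easton product is cardinal-preserving and keeps the $\GCH$. The switches you propose ($\GCH$ at a sparse sequence of cardinals) are exactly the ones used in the paper. The problem is your choice of buttons.

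The buttons $b_n\equiv$ ``$(\kappa_n^+)^\bfL$ is not a cardinal'' are precisely the collapse-style buttons of \cite[Lemma~6.1]{HamkinsLoewe2008:TheModalLogicOfForcing}, and the paper explicitly flags these as problematic: it is not known how to prove their independence (see the footnote to the paper's proof and the discussion at the end of \cite[\S\,4]{HLLsubmitted}). Your justification---``the controls act on pairwise disjoint blocks of cardinals, so operating one leaves the others untouched''---only addresses the designated control forcings applied directly over $V$. Independence is a stronger, necessitated requirement: in an \emph{arbitrary} forcing extension $W$ of $V$, with an arbitrary pattern of buttons already pushed, you must be able to push $b_n$ without pushing any other unpushed $b_m$ and without disturbing the switch pattern. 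In such a $W$ the $\GCH$ may fail badly and cardinals may already be rearranged, so the natural collapse $\Coll(\kappa_n,(\kappa_n^+)^\bfL)$ computed in $W$ has size $|(\kappa_n^+)^\bfL|^{<\kappa_n}$, which can be large and can collapse many cardinals above $\kappa_n^+$, potentially pushing other buttons and flipping switches; no one knows how to rule this out in general. The repair is the one the paper makes: replace the collapse buttons by controls whose independence is known---Rittberg's buttons, the Friedman--Fuchino--Sakai buttons, or the stationary-set buttons of \cite[Theorem 29]{HamkinsLoewe2008:TheModalLogicOfForcing} (``$S_n$ is no longer stationary'' for a definable partition of $\omega_1^\bfL$ into stationary sets); for the last option one should arrange the Easton product to begin above $\omega_1$ so that it does not interfere with stationarity there. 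With that substitution your argument goes through and is the paper's proof.
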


\begin{proofsketch} In fact, this is the model $V$ constructed in the proof of Theorem~\ref{Theorem.S4.2Arises}: since $\theoryf{S4.2}$ is a general lower bound for the modal logic of forcing, we only have to show that independent upward buttons and switches exist.
We have such a family for  $\bfL$,\footnote{Note that the buttons provided in the proof of
\cite[Lemma\,6.1]{HamkinsLoewe2008:TheModalLogicOfForcing} are problematic since we do not know how to prove their independence, but there are other independent buttons in that paper.  Cf.\ the discussion at the end of \cite[\S\,4]{HLLsubmitted}.} and by observing that the forcing to add $G$ was cardinal-preserving and the $\GCH$ holds, we can use the buttons proposed by Rittberg \cite{Rittberg2010:TheModalLogicOfForcing} or Friedman, Fuchino and Sakai \cite{FriedmanFuchinoSakai:OnTheSetGenericMultiverse} or, alternatively, our stationary buttons from \cite[Theorem 29]{HamkinsLoewe2008:TheModalLogicOfForcing} (provided that we start the other forcing above $\omega_1$). The switches are $\GCH$ at
$\aleph_{\omega+n}$.\end{proofsketch}

\begin{thm}\label{thm:s42s5}
If $\ZFC +$``there is an inaccessible cardinal $\delta$ in $\bfL$ such that $\bfL_\delta\elesub\bfL$'' is consistent, then there is a model of set theory whose modal logic of forcing is $\theoryf{S4.2}$ and whose modal logic of grounds is $\theoryf{S5}$.
\end{thm}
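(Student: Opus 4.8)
The plan is to build a single model $V$ that simultaneously carries the upward structure forcing $\MLF{V}=\theoryf{S4.2}$ and a downward structure forcing $\MLG{V}=\theoryf{S5}$. For the upward fragment there is essentially nothing new beyond Theorem~\ref{thm:doubles4.2}: since $\theoryf{S4.2}$ is a general lower bound, it suffices to exhibit arbitrarily large finite independent families of upward buttons and switches, supplied exactly as there (cardinal\nobreakdash-preserving $\GCH$-pattern switches together with the stationary or Rittberg buttons), so that Theorem~\ref{thm:buttonswitch} pins the upward logic at $\theoryf{S4.2}$. The work therefore concentrates on the downward fragment.

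For $\MLG{V}=\theoryf{S5}$ two things are required. The upper bound $\MLG{V}\of\theoryf{S5}$ follows from Theorem~\ref{thm:s5} once we produce arbitrarily large finite independent families of \emph{downward} switches; these can be taken to be the ``least-index'' generic-existence statements from the proof of Theorem~\ref{Theorem.S4.2Arises}, whose parity is freely adjustable by selectively removing factors of the generic when passing to a deeper ground, and which are therefore genuinely switchable (not merely pushable) downward. The real content is the matching lower bound $\theoryf{S5}\of\MLG{V}$. Directedness of the grounds (hence downward $\axiomf{.2}$) will hold as in Theorem~\ref{Theorem.S4.2Arises}, but for the full strength of $\theoryf{S5}$ we additionally need the absence of any non-trivial downward button: equivalently, $V$ should satisfy a \emph{downward maximality principle}, namely that any assertion holding throughout the grounds of some ground of $V$ already holds in $V$ itself, which validates $\possible\necessary\varphi\to\necessary\varphi$ read downward and so yields all of $\theoryf{S5}$. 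It is exactly here that the hypothesis ``$\delta$ inaccessible in $\bfL$ with $\bfL_\delta\elesub\bfL$'' is used: as in the consistency proof of the upward maximality principle, the correctness of $\bfL_\delta$ lets us run a forcing of length $\delta$ whose bookkeeping, by elementarity, anticipates every assertion that could become downward-necessary in a ground, and to arrange the generic so that below any given ground one can always descend further to falsify any not-yet-true such assertion.

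I expect the main obstacle to be precisely this lower bound: establishing the downward homogeneity (no non-trivial downward buttons) through the reflection afforded by $\bfL_\delta\elesub\bfL$. The difficulty is twofold. First, the very device that produces the downward switches---removing factors of the generic---is what manufactured genuine downward \emph{buttons}, such as ``there is no $\bfL$-generic subset of $\aleph_n^\bfL$'', in the bottomless model of Theorem~\ref{Theorem.S4.2Arises}; the forcing below $\delta$ must therefore be designed so that every such removal can be undone arbitrarily deep, and this is where the elementary correctness of $\bfL_\delta$ is essential in showing that no definable assertion can permanently latch onto a single $\bfL$-cardinal. Second, one must verify that imposing this downward homogeneity does not destroy the upward buttons, for otherwise the upward logic would itself collapse to $\theoryf{S5}$; since the upward buttons concern forcing extensions while the downward homogeneity concerns grounds, the two requirements are formally independent, but confirming their non-interference in a single generic is the delicate technical heart of the argument.
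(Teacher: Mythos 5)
Your framing of what must be proved is right: upward $\theoryf{S4.2}$ exactly as in Theorem~\ref{thm:doubles4.2}, the upper bound $\MLG{V}\subseteq\theoryf{S5}$ from the downward switches of Theorem~\ref{Theorem.S4.2Arises} via Theorem~\ref{thm:s5}, and for the lower bound the statement that every downward button is already pushed. But at exactly the point you identify as the heart of the matter, your proposal stops specifying a construction: ``run a forcing of length $\delta$ whose bookkeeping, by elementarity, anticipates every assertion that could become downward-necessary'' names no poset, and without one neither the downward directedness, nor the survival of the downward switches, nor the non-interference with the upward buttons (which you correctly flag as a worry) can be checked. The paper does not build a new bookkeeping iteration at all. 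It performs the same Reitz class forcing $\P=\prod_{\gamma\in\mathrm{Reg}^\bfL}\Add(\gamma,1)$ over $\bfL$ to get $\bfL[G]$, notes that inaccessibility of $\delta$ gives $\bfL_\delta[G_\delta]\elesub\bfL[G]$, and then takes the model to be the \emph{tail} extension $V:=\bfL[G^\delta]$. The key argument is then a reflection, not a bookkeeping: if some ground of $V$ pushes a downward button, this is expressible in $\bfL[G]$, so by elementarity $\bfL_\delta[G_\delta]$ has such a ground, i.e.\ the button is pushed by a ground of $\bfL[G]$ reached by forcing of size less than $\delta$; by the chain-condition argument of Theorem~\ref{Theorem.S4.2Arises} any such small-forcing ground contains $\bfL[G^\delta]$ and has it as a ground, so the button, being downward necessary there and preserved to deeper grounds, is already pushed in $V$. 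Because the model is just a tail of the Reitz extension, directedness, the downward switches, and the upward buttons and switches all carry over with no new interference issues.

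A secondary caution: your plan to ``descend further to falsify any not-yet-true such assertion'' is the contrapositive of the right target, but as a construction principle it pulls against itself --- the same factor-removals that would let you falsify statements on demand are precisely what manufacture genuine downward buttons in the bottomless model, and a statement like ``there is no $\bfL$-generic subset of $\aleph_n^\bfL$'' cannot be falsified by descending. The paper's resolution is not to prevent buttons from existing below $\bfL[G]$, but to position $V$ deep enough (namely at the tail past $\delta$) that every button any ground can push is one that $V$ has already pushed. Supplying that concrete choice of $V$, and the elementarity-plus-chain-condition argument above, is what is missing from your proposal.
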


\begin{proofsketch} This proof is a combination of the construction in Theorem~\ref{Theorem.S4.2Arises} and an idea from \cite[Theorem 5]{Hamkins2003:MaximalityPrinciple}. We start with an inaccessible cardinal $\delta$ in $\bfL$ such that $\bfL_\delta\elesub \bfL$ and force as in the proof of Theorem~\ref{Theorem.S4.2Arises} 
with the Easton support product $\P := \prod_{\gamma\in\mathrm{Reg}^\bfL} \Add(\gamma,1)$
to obtain $\bfL[G]$.  Since $\delta$ was inaccessible in $\bfL$,  $\P_\delta$ is just $\P$ as defined in $\bfL_\delta$ and we took a direct limit at $\delta$; thus, we still have $\bfL_\delta[G_\delta]\elesub \bfL[G]$. We claim that $V := \bfL[G^\delta]$ satisfies the conclusion of the theorem.

Upward $\theoryf{S4.2}$ follows as in the proof of Theorem~\ref{thm:doubles4.2}. Let us show that all downward buttons are pushed (this will establish $\theoryf{S5}$ as a lower bound for the modal logic of grounds):
If there is a ground pushing a downward button, then this fact is expressible, and so there must be a ground of $\bfL_\delta[G_\delta]$ pushing that button, and so this same forcing works with $\bfL[G]$. So there is a
small forcing pushing that button. And this small forcing ground will contain $\bfL[G^\delta]$, so it is already pushed in $\bfL[G^\delta]$.

In order to show that we have exactly $\theoryf{S5}$ as the modal logic of grounds, we use Theorem~\ref{thm:s5} and observe that the switches of the proof of Theorem~\ref{Theorem.S4.2Arises} still work.
\end{proofsketch}

We remark that if a universe has an independent family of upward switches and buttons, then so does any ground of that universe. This means that upward \theoryf{S4.2} is downwards necessary. Similarly, if a model has downward buttons and switches, then this remains upward necessary.\footnote{Unfortunately, we cannot conclude that downward \theoryf{S4.2} is upwards necessary, since we do not know whether downward \axiomf{.2} is valid in every model of set theory.}

Now, we consider the dual situation to that of Theorem~\ref{thm:s42s5}: upward $\theoryf{S5}$ and downward $\theoryf{S4.2}$.

\begin{thm}\label{conj:s5s42}
If $\ZFC +$``there is an inaccessible cardinal $\delta$ in $\bfL$ such that $\bfL_\delta\elesub\bfL$'' is consistent, then there is a model of set theory whose modal logic of forcing is $\theoryf{S5}$ and whose modal logic of grounds is $\theoryf{S4.2}$.
\end{thm}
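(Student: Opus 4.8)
The plan is to dualize the construction of Theorem~\ref{thm:s42s5}, but the naive dual fails and must be supplemented by the Maximality Principle machinery of \cite{Hamkins2003:MaximalityPrinciple}. Working over $\bfL$ with the inaccessible $\delta$ satisfying $\bfL_\delta\elesub\bfL$, one is tempted simply to take $V:=\bfL[G_\delta]$, the initial segment of the product forcing living below $\delta$, hoping that its position as a small forcing extension of $\bfL$ forces all upward buttons to be pushed. This cannot work as stated: the region above $\delta$ is left untouched, so a statement such as ``there is an $\bfL$-generic subset of $\aleph_{\delta+1}^\bfL$'' is an unpushed upward button in $\bfL[G_\delta]$, and hence $\MLF{\bfL[G_\delta]}$ is only $\theoryf{S4.2}$. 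The reflecting cardinal must instead be exploited to bound the size of the forcing needed to push buttons, exactly as in Hamkins' argument for $\MP$.

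Concretely, I would replace the plain Easton product by a length-$\delta$ Easton-support iteration that interleaves the two roles: at each regular $\gamma<\delta$ one forces with $\Add(\gamma,1)$ (as computed in the relevant intermediate model) together with a lottery sum anticipating forceably necessary statements, with the book-keeping arranged using $\bfL_\delta\elesub\bfL$. Let $V:=\bfL[G_\delta]$ be the resulting extension. For downward $\theoryf{S4.2}$ one uses the $\Add(\gamma,1)$-factors precisely as in Theorem~\ref{Theorem.S4.2Arises}, now truncated at $\delta$: by Reitz's tail argument every ground of $V$ contains a tail $\bfL[G_\delta\restrict[\alpha,\delta)]$, so the grounds are directed by taking a maximum and $\mathrm{DDG}$ (hence downward $\axiomf{.2}$) holds; partitioning the regular cardinals below $\delta$ into $\omega$ classes each unbounded in $\delta$ (possible since $\delta$ is inaccessible) yields the independent downward buttons $b_n$ and switches $s_n$, so Theorem~\ref{thm:buttonswitch} gives $\MLG{V}=\theoryf{S4.2}$.

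For upward $\theoryf{S5}$ the lottery part makes $V$ a model of the Maximality Principle by the usual reflection: any statement that is forceably necessary over $V$ is, by $\bfL_\delta\elesub\bfL$, forced to be necessary by a forcing of size $<\delta$, which the iteration has already absorbed, so the statement is already necessary in $V$. Thus no upward button is unpushed, giving $\theoryf{S5}$ as a lower bound for $\MLF{V}$; since the forcing is cardinal-preserving and the $\GCH$-pattern switches above $\delta$ (as in Theorem~\ref{thm:doubles4.2}) are unaffected by $\MP$, they furnish arbitrarily large independent families of upward switches, and Theorem~\ref{thm:s5} supplies the matching upper bound. Hence $\MLF{V}=\theoryf{S5}$.

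The hard part will be showing that the two ingredients do not interfere. On one side, the lottery iteration must not destroy the bottomless, directed ground structure supplied by the $\Add$-factors, nor accidentally push the downward buttons $b_n$ or freeze the switches $s_n$; this requires checking that Reitz's chain-condition and tail argument survive the iteration (Easton support together with the closure of the tail forcing should suffice) and that the lottery options are chosen so as never to force the particular statements witnessing the $b_n$ and $s_n$. On the other side, the additional cardinal-preserving $\Add$-factors must not obstruct the reflection underlying $\MP$, which should persist because they are absorbed into the iteration and leave the reflection at $\delta$ intact. Verifying this mutual non-interference---and in particular confirming that downward $\axiomf{.2}$ genuinely survives, given that we do not know it to hold in general---is the crux of the argument.
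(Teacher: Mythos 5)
Your instinct to bring in the Maximality Principle machinery is right, but the decision to truncate the Easton product at $\delta$ and house everything inside a single length-$\delta$ iteration creates a genuine gap on the downward side. The paper's construction instead forces first with the \emph{full class-length} product $\P=\prod_{\gamma\in\mathrm{Reg}^\bfL}\Add(\gamma,1)$ to reach the bottomless model $\bfL[G]$ (noting $\bfL_\delta[G_\delta]\elesub\bfL[G]$), and only then performs the $\MP$ forcing, which is equivalent to the L\'evy collapse of $\delta$ to $\omega_1$, landing in $\bfL[G][h]$. The point of keeping the whole product is that there remain $\Add$-factors at every regular $\bfL$-cardinal \emph{above} $\delta$; since the collapse is $\delta$-c.c., those factors and the cardinals carrying them are untouched, so the downward buttons, switches, and the Reitz tail/directedness argument can all be run above $\delta$ exactly as in Theorem~\ref{Theorem.S4.2Arises}. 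In your model $V=\bfL[G_\delta]$ there are no factors above $\delta$, so all of your control statements must live below $\delta$ --- precisely the region the $\MP$ forcing destroys. Concretely: to get $\MP$ the iteration must collapse $\delta$ to $\omega_1$, so in $V$ every $\aleph_n^\bfL$, and indeed $(2^{|\Add(\gamma,1)^\bfL|})^\bfL$ for each relevant $\gamma<\delta$, is countable; by Rasiowa--Sikorski, any model in which these are countable automatically contains an $\bfL$-generic filter on $\Add(\gamma,1)^\bfL$, manufactured from a collapsing real rather than from a retained factor of $G$. Hence $b_n$ is false, and the switch values are pinned at ``least index $=0$'', in $V$ and in every ground where the collapse persists, \emph{regardless} of which $\Add$-factors that ground retains. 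The controls are no longer operated by removing disjoint factors of $G$, their independence and operability from arbitrary grounds fail, and Theorem~\ref{thm:buttonswitch} cannot be applied.

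A secondary problem: Reitz's chain-condition argument shows a ground contains the tail $G^\alpha$ only for $\alpha$ large relative to the size of the forcing taking that ground to $V$. When the whole extension has size $\delta$, that threshold can exceed $\delta$, and indeed $\bfL$ itself is a ground of your $V$ containing no nontrivial tail. So directedness of grounds for your model would have to come from $\bfL$ being a common bedrock (via the intermediate-model theorem), not from tails; that does salvage downward $\axiomf{.2}$, but it changes the ground structure in a way that makes it unclear where arbitrarily large independent families of unpushed downward buttons and switches would come from at all. The fix is the paper's: do not truncate --- perform the full Reitz product over all of $\bfL$ first, then collapse $\delta$, and take the downward controls from the part of the product above $\delta$.
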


\begin{proofsketch}
Again, this proof is a combination of the constructions in Theorem~\ref{Theorem.S4.2Arises} and 
\cite[Theorem 5]{Hamkins2003:MaximalityPrinciple}. Start in $\bfL$ with $\bfL_\delta \elesub \bfL$ and $\delta$ inaccessible in $\bfL$. Force to $\bfL[G]$ with the Easton support product $\P := \prod_{\gamma\in\mathrm{Reg}^\bfL} \Add(\gamma,1)$, as in Theorem \ref{Theorem.S4.2Arises}. As before in the proof of Theorem \ref{thm:s42s5}, we still have $\bfL_\delta[G_\delta]\elesub \bfL[G]$ (as $\delta$ was inaccessible in $\bfL$). We can now appeal to the definability of forcing relation and the fact that $\bfL_\delta \elesub \bfL$: anything a condition $p$ forces over $\bfL$ with $\P$ is the same as what it forces over $\bfL_\delta$ with $\P_\delta$.

Finally, we perform the forcing to obtain upwards $\theoryf{S5}$ from \cite[Theorem 5]{Hamkins2003:MaximalityPrinciple} to obtain $\bfL[G][h]$. This forcing is the same as the L\'{e}vy
collapse making $\delta$ into $\omega_1$. In $\bfL[G][h]$, we have upward $\theoryf{S5}$. However, we also have downward $\axiomf{.2}$: any ground of $\bfL[G][h]$ will contain some tail $\bfL[G^\alpha]$ just as in Theorem \ref{Theorem.S4.2Arises}, and thus we can take maximums as there to verify $\axiomf{.2}$. But now we also have downward buttons and
switches, just as in Theorem \ref{Theorem.S4.2Arises}.
\end{proofsketch}

Theorems~\ref{thm:doubles4.2}, \ref{thm:s42s5}, and \ref{conj:s5s42} take care of three of the four possible distributions of the theories $\theoryf{S4.2}$ and $\theoryf{S5}$ to the upward and downward modal logics. This naturally raises the question whether it is possible to have $\theoryf{S5}$ in both directions. We'll close this paper with the simple proof of the negative answer to this question:

\begin{thm}
There is no model of set theory such that both its modal logic of forcing and its modal logic of grounds are $\theoryf{S5}$.
\end{thm}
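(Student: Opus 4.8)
The plan is to derive a contradiction from the assumption that $\MLF{V}=\theoryf{S5}$ and $\MLG{V}=\theoryf{S5}$ simultaneously. First I would unwind what $\theoryf{S5}$ means in each direction. Since the only consistent normal extension of $\theoryf{S5}$ is the one-point logic $\theoryf{PL}$, getting exactly $\theoryf{S5}$ (rather than $\theoryf{PL}$) requires a nontrivial switch in that direction; and its characteristic axiom beyond $\theoryf{S4.2}$, axiom $5$, is valid precisely when there are no \emph{unpushed buttons}, i.e.\ when every forceably necessary statement is already necessary (in the convergent setting this is the equivalence $\possible\necessary\varphi\to\necessary\varphi$). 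Thus upward $\theoryf{S5}$ yields no unpushed upward buttons, and downward $\theoryf{S5}$ yields a downward switch---so $V$ has a proper ground and $V\models\neg\GA$---together with no unpushed downward buttons.

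The first clean consequence I would record is that downward $\theoryf{S5}$ forces $V$ to be bottomless. Indeed, $\GA$ is a downward button: at a bedrock it is downward necessary, so the assertion ``$V$ has a bedrock'' is exactly the statement that $\GA$ is forceably downward necessary. Were $V$ to have a bedrock, the absence of unpushed downward buttons would force $\GA$ to be downward necessary, hence true in $V$, contradicting $V\models\neg\GA$. So $V$ has no bedrock; and since $\theoryf{S5}$ contains $\axiomf{.2}$, the grounds are moreover downward directed (Question~\ref{q:ddg} has answer ``yes''), with the mantle $\mathbb{M}=\bigcap_W W$ a proper-class inner model that is not itself a ground.

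The engine of the contradiction is a single pair of dual control statements, for which I would reuse the buttons of Theorem~\ref{Theorem.S4.2Arises}. For each $n$ let $a_n$ be the sentence ``there is an $\bfL$-generic subset of $\aleph_n^\bfL$''. Forcing with $\Add(\aleph_n^\bfL,1)^\bfL$ over $V$ adds such a subset and keeps $a_n$ true in all further extensions, so $a_n$ is an upward button; by upward $\theoryf{S5}$ it is already pushed, i.e.\ $V\models a_n$. Dually $\neg a_n$ is downward persistent---passing to a ground only removes sets---so it is a downward button, and by downward $\theoryf{S5}$, as soon as $\neg a_n$ is \emph{reachable} in some ground of $V$ it must be downward necessary, giving $V\models\neg a_n$ and contradicting $V\models a_n$.

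The hard part will be exactly this reachability: I must produce a ground of $V$ omitting every $\bfL$-generic subset of $\aleph_n^\bfL$, so that $\neg a_n$ is a genuinely unpushed downward button rather than a vacuously pushed one. Concretely, a subset witnessing $a_n$ in \emph{every} ground would lie in $\mathbb{M}$, so it suffices to show that $\mathbb{M}$ contains no such generic set. This is the crux, and it is where the structural asymmetry between the freely available upward forcing and the ground sequence bounded below by $\mathbb{M}$ must be used: I would exploit the downward directedness and bottomlessness from the second step to peel generics off as I descend, driving $P(\aleph_n^\bfL)$ toward its mantle value, and then verify that no $\bfL$-generic subset of $\aleph_n^\bfL$ survives into the forcing-rigid mantle. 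Granting this, $\neg a_n$ is an unpushed downward button, contradicting downward $\theoryf{S5}$, and the theorem follows.
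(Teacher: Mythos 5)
Your preliminary observations are sound: downward $\theoryf{S5}$ does force $\neg\GA$, the bedrock argument correctly shows $V$ must be bottomless, and the upward half of your engine works --- each $a_n$ is an upward button via $\Add(\aleph_n^\bfL,1)^\bfL$, so upward $\theoryf{S5}$ gives $V\models a_n$. But the step you yourself flag as ``the crux'' is a genuine gap, and not one that can be filled along these lines. To contradict downward $\theoryf{S5}$ you must show that $\neg a_n$ is downward \emph{reachable}, i.e.\ that some ground of $V$ contains no $\bfL$-generic subset of $\aleph_n^\bfL$ whatsoever. Nothing in the hypotheses controls how the grounds of $V$ sit relative to $\bfL$: the mantle of a model of set theory can be essentially arbitrary (every model of $\ZFC$ is the mantle of another model), so it may perfectly well contain $\bfL$-generic subsets of every $\aleph_n^\bfL$, in which case every ground of $V$ satisfies $a_n$, the sentence $\neg a_n$ is nowhere reachable and hence not a downward button at all, and no contradiction with downward $\theoryf{S5}$ arises. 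Note also that your reduction to the mantle is itself faulty: from ``every ground contains some witness to $a_n$'' it does not follow that a single witness lies in every ground, so even proving that the mantle contains no $\bfL$-generic subset of $\aleph_n^\bfL$ would not produce the ground you need, which must omit \emph{all} such witnesses simultaneously. The underlying problem is that any control statement anchored to a fixed inner model such as $\bfL$ cannot serve as the downward half of the dual pair, because the hypothesis that both logics are $\theoryf{S5}$ says nothing about how deep the grounds reach relative to $\bfL$.

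The paper sidesteps this by choosing a statement whose downward-button character is automatic from the well-ordering of the ordinals rather than from any structural knowledge of the grounds. Let $\gamma$ be the least ordinal above $\omega$ that is a cardinal in some ground, so that $\gamma=\min\{\omega_1^W : W \mbox{ a ground of } V\}\le\omega_1^V$. At a ground $U$ realizing this minimum, every deeper ground $U'$ has $\omega_1^{U'}=\omega_1^U$, so ``$\gamma=\omega_1$'' is downward necessary at $U$; hence ``$\gamma=\omega_1$'' is a downward button in \emph{every} model of $\ZFC$, with no auxiliary hypotheses. Its negation is an upward button, since collapsing the current $\omega_1$ makes $\gamma<\omega_1$ permanently true in all further extensions. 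The two $\theoryf{S5}$ hypotheses then push both buttons, forcing $\gamma=\omega_1$ and $\gamma\neq\omega_1$ simultaneously. If you want to rescue your argument, replace the $\bfL$-anchored sentences $a_n$ by a self-referential multiverse statement of this kind.
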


\begin{proof} Let us call an ordinal $\alpha$ a \emph{ground cardinal} if there is a ground in which $\alpha$ is a cardinal. Let $\gamma$ be the least infinite ground cardinal; clearly, $\gamma \leq \omega_1$. The statement ``$\gamma = \omega_1$'' is a downward button, but its negation is an upward button. So, if we had a model $V$ with $\mathrm{ML}(\Boxup,V) = \theoryf{S5} = \mathrm{ML}(\Boxdown,V)$, then $\gamma = \omega_1$ would have to be true by downward $\theoryf{S5}$, but false by upward $\theoryf{S5}$. \end{proof}

\bibliographystyle{abbrv}
\bibliography{icla2013}

\end{document}